\newtheorem{thm}{Theorem}
\newtheorem{prop}[thm]{Proposition}
\newtheorem{cor}[thm]{Corollary}
\theoremstyle{definition}
\newtheorem{rmk}[thm]{Remark}
\theoremstyle{remark}
\def\dotminussym#1#2{%
  \setbox0=\hbox{$\m@th#1-$}%
  \kern.5\wd0%
  \hbox to 0pt{\hss\hbox{$\m@th#1-$}\hss}%
  \raise.6\ht0\hbox to 0pt{\hss$\m@th#1.$\hss}%
  \kern.5\wd0}
\newcommand{\Z}{\mathbb{Z}}
\newcommand{\curly}[1]{\mathcal{#1}}
\newcommand{\B}{\curly{B}}
\renewcommand{\to}{\rightarrow}
\def \<{\langle}
\def \>{\rangle}
\def \*Z {{{^*}\Z}}
\def \((  {(\!(}
\def \)) {)\!)}
\numberwithin{equation}{section}
\def \id{\operatorname{id}}
\def\om{\omega}
\title{Model theory and the QWEP Conjecture}
\author{Isaac Goldbring}
\thanks{The work here was partially supported by NSF CAREER grant DMS-1349399.}
\address {Department of Mathematics, Statistics, and Computer Science, University of Illinois at Chicago, Science and Engineering Offices M/C 249, 851 S. Morgan St., Chicago, IL, 60607-7045}
\email{isaac@math.uic.edu}
\urladdr{http://www.math.uic.edu/~isaac}
\begin{document}

\begin{abstract}
We observe that Kirchberg's QWEP conjecture is equivalent to the statement that $C^*(\mathbb{F})$ is elementarily equivalent to a QWEP C$^*$ algebra.  We also make a few other model-theoretic remarks about WEP and LLP C$^*$ algebras.
\end{abstract}
\maketitle

For the sake of simplicity, all C$^*$ algebras in this note are assumed to be unital.

Suppose that $B$ is a C$^*$ algebra and $A$ is a subalgebra.  We say that $A$ is \emph{relatively weakly injective} in $B$ if there is a u.c.p.\ map $\phi:B\to A^{**}$ such that $\phi|B=\id_A$; such a map is referred to as a \emph{weak conditional expectation}.  (We view $A$ as canonically embedded in its double dual.)  A C$^*$ algebra $A$ is said to have the \emph{weak expectation property} (or be WEP) if it is relatively weakly injective in every extension and $A$ is said to be \emph{QWEP} if it is the quotient of a WEP algebra.

\emph{Kirchberg's QWEP Conjecture} states that every separable C$^*$ algebra is QWEP.  In the seminal paper \cite{Kirchberg}, Kirchberbg proved that the QWEP Conjecture is equivalent to the Connes Embedding Problem (CEP), namely that every finite von Neumann algebra embeds into an ultrapower of the hyperfinite II$_1$ factor.  

If $\mathbb{F}$ is the free group on countably many generators, then using the fact that $C^*(\mathbb{F})$ is surjectively universal, we see that the QWEP conjecture is equivalent to the statement that $C^*(\mathbb{F})$ is QWEP.  The main point of this note is to point to an a priori weaker equivalent statement of the QWEP conjecture:

\begin{thm}\label{main}
The QWEP conjecture is equivalent to the statement that $C^*(\mathbb{F})$ is elementarily equivalent to a QWEP C$^*$ algebra.
\end{thm}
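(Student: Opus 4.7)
One direction is immediate: if the QWEP conjecture holds then $C^*(\mathbb{F})$ is itself QWEP, hence trivially elementarily equivalent to a QWEP algebra. The work lies in the converse. So suppose $C^*(\mathbb{F}) \equiv A$ for some QWEP C$^*$ algebra $A$; the goal is to show that $C^*(\mathbb{F})$ is QWEP, which by the surjective universality of $C^*(\mathbb{F})$ noted above is equivalent to the full QWEP conjecture.

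The plan is a three-step ultrapower transfer. First, apply the Keisler--Shelah theorem for continuous logic to produce an ultrafilter $\mathcal{U}$ (on a suitable index set) with $C^*(\mathbb{F})^\mathcal{U} \cong A^\mathcal{U}$. Second, transfer QWEP to this ultrapower: writing $A = B/J$ with $B$ a WEP algebra, $A^\mathcal{U}$ is a quotient of $B^\mathcal{U}$, and (granting that WEP is preserved under ultrapowers) $B^\mathcal{U}$ is WEP, whence $A^\mathcal{U} \cong C^*(\mathbb{F})^\mathcal{U}$ is QWEP. Third, descend QWEP back to $C^*(\mathbb{F})$ by observing that $C^*(\mathbb{F})$ is relatively weakly injective in its own ultrapower: the weak$^*$-limit map $(x_i)_\mathcal{U} \mapsto \mathrm{weak}^*\text{-}\lim_\mathcal{U} x_i$ is a u.c.p.\ map $C^*(\mathbb{F})^\mathcal{U} \to C^*(\mathbb{F})^{**}$ fixing the diagonal copy, and it is a standard result (essentially due to Kirchberg) that QWEP passes from any QWEP algebra $D$ to any relatively weakly injective subalgebra $C \subseteq D$.

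The principal obstacle is the second step, namely the preservation of WEP (and hence of QWEP) under ultrapowers. In the framework of continuous model theory this amounts to the axiomatizability of WEP, which requires recasting the definition of weak conditional expectation in a $\forall\exists$-form: given a finite tuple of norm-one elements of the ultrapower $B^\mathcal{U}$, one must assemble a weak conditional expectation defined on them from the weak conditional expectations present in each coordinate $B_i$. Once this axiomatizability/preservation result is granted, the remaining Keisler--Shelah and descent steps are essentially formal, and the theorem follows.
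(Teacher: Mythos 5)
Your first and third steps match the paper's argument: the paper proves that QWEP algebras form an axiomatizable class via the semantic test (closed under isomorphism, ultraproducts, and ultraroots), and its ultraroot step is exactly your descent step --- $C^*(\mathbb{F})$ is relatively weakly injective in $C^*(\mathbb{F})^{\mathcal U}$ via the pointwise ultraweak limit map, and QWEP passes to relatively weakly injective unital subalgebras by Kirchberg's Corollary 3.3(iii). Your Keisler--Shelah framing is an equivalent packaging of axiomatizability, so the architecture is sound.

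The genuine gap is your second step. You propose to get QWEP for $A^{\mathcal U}$ by writing $A=B/J$ with $B$ WEP and ``granting that WEP is preserved under ultrapowers.'' That premise is \emph{false}: as the paper notes in the remark following Proposition \ref{axiom}, the ultrapower of $\B(H)$ does not have WEP (this is the result of \cite{FAE}), even though $\B(H)$, being injective, has WEP. So WEP is not axiomatizable and no $\forall\exists$ recasting of the weak conditional expectation can rescue this route; the obstacle you flag as ``principal'' is in fact an impossibility. The correct way to close the gap --- and what the paper does --- is to route the preservation through QWEP itself rather than WEP: the ultrapower $A^{\mathcal U}$ is a quotient of the full product $\prod A$, QWEP is closed under arbitrary products by \cite[Corollary 3.3(i)]{Kirchberg}, and QWEP is obviously closed under quotients. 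With that substitution your argument goes through and coincides with the paper's proof.
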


Here, two C$^*$ algebras $A$ and $B$ are \emph{elementarily equivalent} if they have the same first-order theories in the sense of model theory.  (Here, we work in the signature for \emph{unital} C$^*$ algebras.)  

The next lemma is probably well known to the experts but since we could not locate it in the literature we include a proof here.
\begin{prop}\label{folklore}
Let $A$ be a C$^*$ algebra and $\omega$ a nonprincipal ultrafilter on some (possibly uncountable) index set.
\begin{enumerate}
\item Suppose that $A$ is a subalgebra of $B$ and that $B$ admits a u.c.p.\ map into $A^\om$ that restricts to the diagonal embedding of $A$.  Then $A$ is relatively weakly injective in $B$.
\item $A$ is relatively weakly injective in $A^\om$.
\end{enumerate}
\end{prop}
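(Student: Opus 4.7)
The plan is to construct a canonical u.c.p. map $\pi\colon A^\om \to A^{**}$ which restricts to the identity on the diagonal copy of $A$. Part (1) then follows by composition, and part (2) is an immediate corollary of (1) applied to $B = A^\om$ with $\phi$ the identity map.

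To build $\pi$, I would use the fact that $A^{**}$ is a von Neumann algebra whose unit ball is weak$^*$-compact (as the dual of $A^*$). Given a bounded sequence $(a_i)_{i\in I}$ representing an element of $A^\om$, the net $(a_i)$ sits in a weak$^*$-compact ball of $A^{**}$, so its ultralimit along $\om$ exists in the weak$^*$-topology. Define
\[
  \pi\bigl([(a_i)]\bigr) := \operatorname{wk}^*\text{-}\lim_{i\to\om} a_i.
\]
I would check this is well-defined on $A^\om$: if $\lim_\om \|a_i\| = 0$, then $|\eta(a_i)|\le\|a_i\|\,\|\eta\|$ forces the weak$^*$-ultralimit to vanish. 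Linearity and unitality are immediate, and positivity passes to weak$^*$-limits because positivity in $A^{**}$ is characterised by the conditions $\eta(a)\ge 0$ for $\eta$ in the cone of positive functionals of $A^*$. Restricted to the diagonal copy of $A$, $\pi$ is the identity, since constant sequences ultralimit to themselves inside $A\subseteq A^{**}$.

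Once $\pi$ is in hand, the proof of (1) takes one line: $\psi := \pi\circ\phi\colon B\to A^{**}$ is u.c.p.\ and $\psi|_A = \pi|_A = \id_A$, so $A$ is relatively weakly injective in $B$ via $\psi$. For (2), applying (1) with $B = A^\om$ and $\phi = \id_{A^\om}$ gives a weak conditional expectation $A^\om\to A^{**}$ restricting to the identity on the diagonal copy of $A$.

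The only technical point I would be careful about is the \emph{complete} positivity of $\pi$ (the hypothesis in (1) is stated for u.c.p., not merely positive, maps). This reduces to the positivity of the analogous weak$^*$-ultralimit map at each matrix level, using the canonical identifications $M_n(A^\om) \cong (M_n(A))^\om$ (with ultralimits computed entrywise) and $M_n(A^{**})\cong (M_n(A))^{**}$. This is really the only nontrivial input; the rest of the argument is formal.
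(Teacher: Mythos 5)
Your proposal is correct and its core is the same as the paper's: both proofs rest on the u.c.p.\ map $A^\om\to A^{**}$ given by the ultraweak (weak$^*$) ultralimit of a representing family, with part (1) obtained by composing this map with $\phi$. The one divergence is in deducing (2): you apply (1) to $B=A^\om$ with $\phi=\id_{A^\om}$, which is legitimate and more direct, whereas the paper instead passes to a larger ultrafilter $\om'$ and an embedding $A^\om\hookrightarrow A^{\om'}$ restricting to the diagonal embedding of $A$; both routes are one-line consequences of (1), so nothing substantive is gained or lost either way. Your attention to complete positivity via $M_n(A^\om)\cong (M_n(A))^\om$ and $M_n(A^{**})\cong (M_n(A))^{**}$ is a detail the paper leaves implicit.
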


\begin{proof}
Part (1) is almost identical to the easy direction of \cite[Corollary 3.2(ii)]{Kirchberg} except there he works with the corona algebra instead of the ultrapower.  In any event, the proof is easy so we give it here:  if $\phi:B\to A^\om$ is a u.c.p.\ map restricting to the diagonal embedding of $A$, then the desired weak expectation $\psi:B\to A^{**}$ is given by $\psi(b)=\lim_\om b_n$, where $b=(b_n)^\bullet$ and the ultralimit is taken with respect to the ultraweak topology.  (2) follows immediately from (1) by taking a nonprincipal ultrafilter $\om'$ on a big enough index set so as to allow for an embedding $A^\om$ into $A^{\om'}$ that restricts to the diagonal embedding of $A$ into $A^{\om'}$.    
\end{proof}

Theorem \ref{main} follows immediately from the following:

\begin{prop}\label{axiom}
The set of C$^*$ algebras with QWEP forms an axiomatizable class.
\end{prop}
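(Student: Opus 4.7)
The plan is to invoke the standard continuous-logic axiomatizability criterion (due to Henson and Iovino): a class of C$^*$-algebras in the language of unital C$^*$-algebras is axiomatizable if and only if it is closed under isomorphism, ultraproducts, and ultraroots. Since QWEP is clearly isomorphism-invariant, the task reduces to verifying the remaining two closure conditions.

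For closure under ultraroots, suppose $A^\om$ is QWEP. Proposition \ref{folklore}(2) tells us that $A$ is relatively weakly injective in $A^\om$, and I would then appeal to the standard Kirchberg-type fact that QWEP descends to relatively weakly injective C$^*$-subalgebras. Briefly: a weak expectation $\phi\colon A^\om\to A^{**}$ bidualizes to a normal u.c.p.\ surjection $\phi^{**}\colon (A^\om)^{**}\to A^{**}$, and this is compatible with the equivalence ``$X$ is QWEP iff $X^{**}$ is QWEP'' and with Kirchberg's tensor-product characterization of QWEP, giving $A$ QWEP.

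For closure under ultraproducts, write each QWEP $A_i$ as $B_i/I_i$ with $B_i$ WEP. Since C$^*$-ultraproducts are compatible with quotients by (ultraproducts of) ideals, one has $\prod_\om A_i \cong (\prod_\om B_i)/(\prod_\om I_i)$, so the problem reduces to showing that $\prod_\om B_i$ is WEP whenever each $B_i$ is WEP. This is the main obstacle of the proof. The natural route is via Kirchberg's tensor characterization---$B$ is WEP iff $B\otimes_{\max} C^*(\mathbb F)=B\otimes_{\min} C^*(\mathbb F)$---combined with careful tracking of min and max tensor norms across ultraproducts (the min norm behaves well, but the max norm requires more care). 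A complementary approach, which avoids controlling the max tensor norm of an ultraproduct directly, is to pass to the biduals $B_i^{**}$, each of which is injective as a von Neumann algebra, and to use Proposition \ref{folklore}(1) to exhibit $\prod_\om A_i$ as relatively weakly injective in an ambient injective (hence WEP) algebra, thereby sidestepping the question of WEP for the C$^*$-ultraproduct of the $B_i$ themselves. Either route completes the verification and thus yields the axiomatizability of QWEP.
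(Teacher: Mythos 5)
Your ultraroot argument is fine and coincides with the paper's: relative weak injectivity of $A$ in $A^\om$ (Proposition \ref{folklore}(2)) plus Kirchberg's fact that QWEP passes to relatively weakly injective subalgebras. The problem is the ultraproduct step. Your primary route reduces the claim to showing that $\prod_\om B_i$ is WEP whenever each $B_i$ is WEP, and you correctly flag this as the main obstacle --- but it is not merely an obstacle, it is false. The paper's own remark after Proposition \ref{axiom} records that the ultrapower of $\B(H)$ fails WEP even though $\B(H)$, being injective, has WEP; this is exactly why WEP (unlike QWEP) is not axiomatizable. Your fallback route (``exhibit $\prod_\om A_i$ as relatively weakly injective in an ambient injective algebra'' via Proposition \ref{folklore}(1)) is too vague to evaluate: you never say what the ambient algebra is, and Proposition \ref{folklore}(1) produces relative weak injectivity of a subalgebra in an \emph{extension}, not an embedding of $\prod_\om A_i$ into anything injective. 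Note also that even your own reduction only requires $\prod_\om B_i$ to be \emph{QWEP} (since QWEP passes to quotients), so insisting on WEP sets the bar higher than necessary --- and higher than is attainable.

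The missing idea is much simpler and is what the paper does: the ultraproduct $\prod_\om A_i$ is by definition a quotient of the $\ell^\infty$-product $\prod_i A_i$ (by the ideal of sequences whose norms tend to $0$ along $\om$); QWEP is closed under arbitrary products by Kirchberg's Corollary 3.3(i) and is obviously closed under quotients; hence the ultraproduct of QWEP algebras is QWEP. No decomposition $A_i = B_i/I_i$, no tensor-norm bookkeeping, and no control of WEP across ultraproducts is needed. I would replace your entire ultraproduct paragraph with this two-line argument and keep your ultraroot paragraph essentially as is.
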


\begin{proof}
We use the semantic test for axiomatizability, namely we show that the class of QWEP algebras is closed under isomorphism, ultraproduct, and ultraroot.  Clearly the class of QWEP algebras is preserved under isomorphism.  To see that it is closed under ultraproducts, it suffices to note that it is closed under products \cite[Corollary 3.3(i)]{Kirchberg} and (obviously) closed under quotients.  To see that it is closed under ultraroots, we use the fact that $A$ is relatively weakly injective in its ultrapower (Proposition \ref{folklore}(2)) together with the fact that QWEP passes to relatively weakly injective unital subalgebras \cite[Corollary 3.3(iii)]{Kirchberg}.
\end{proof}

\begin{rmk}
Proposition \ref{axiom} is false with QWEP replaced by WEP:  in \cite{FAE} the authors show that the ultrapower of $\B(H)$ does not have WEP.
\end{rmk}

\begin{rmk}
Inductive limits of QWEP algebras are again QWEP (see \cite[Lemma 13.3.6]{BO}), so the class of QWEP algebras is $\forall\exists$-axiomatizable.
\end{rmk}

There is one other model-theoretic way to settle the QWEP conjecture; we refer the reader to \cite{KEP} for the definition of existential embeddings.

\begin{prop}
The QWEP conjecture is equivalent to the statement that there is a QWEP C$^*$ algebra $A$ containing $C^*(\mathbb{F})$ as a subalgebra such that the inclusion is an existential embedding of operator systems.
\end{prop}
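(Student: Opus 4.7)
The plan is as follows. The forward direction is trivial: if the QWEP conjecture holds, then $C^*(\mathbb{F})$ is itself QWEP, and taking $A = C^*(\mathbb{F})$ with the identity inclusion gives an existential embedding of operator systems (any isomorphism is existential).

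For the backward direction, I would exploit the standard ultrapower characterization of existentiality: the inclusion $\iota: C^*(\mathbb{F}) \hookrightarrow A$ is an existential embedding of operator systems if and only if there is a nonprincipal ultrafilter $\om$ (on a sufficiently large index set) and a u.c.p.\ map $\phi: A \to C^*(\mathbb{F})^\om$ such that $\phi \circ \iota$ is the diagonal embedding of $C^*(\mathbb{F})$ into $C^*(\mathbb{F})^\om$. Granting this, Proposition \ref{folklore}(1) applied to $\phi$ immediately yields that $C^*(\mathbb{F})$ is relatively weakly injective in $A$. Since $A$ is QWEP by hypothesis and QWEP is inherited by unital relatively weakly injective C$^*$-subalgebras \cite[Corollary 3.3(iii)]{Kirchberg}, we conclude that $C^*(\mathbb{F})$ is QWEP. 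By the surjective universality of $C^*(\mathbb{F})$ this is equivalent to the full QWEP conjecture.

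The only genuinely nontrivial step is the ultrapower reformulation of existentiality for embeddings of operator systems, which is the operator-system analogue of the familiar criterion for C$^*$-algebras; I would cite (or lift from) the setup in \cite{KEP}, where the argument amounts to the usual compactness/saturation trick in continuous logic. Once that characterization is in hand, the remainder of the proof is a one-line appeal to Proposition \ref{folklore}(1) and Kirchberg's hereditary result for QWEP, with no additional calculations required.
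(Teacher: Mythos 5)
Your proposal is correct and follows essentially the same route as the paper: the paper likewise converts the existential embedding into a u.c.p.\ map $A\to C^*(\mathbb{F})^\om$ restricting to the diagonal embedding, deduces that $C^*(\mathbb{F})$ is relatively weakly injective in $A$, and concludes via Kirchberg's hereditary result for QWEP. The only difference is cosmetic---the paper omits the trivial forward direction and leaves the appeal to Proposition \ref{folklore}(1) implicit.
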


\begin{proof}
Suppose that $A$ is as in the conclusion of the proposition.  Then there is a u.c.p.\  embedding $A\hookrightarrow C^*(\mathbb{F})^\om$ whose restriction to $C^*(\mathbb{F})$ is the diagonal embedding.  It follows that $C^*(\mathbb{F})$ is relatively weakly injective in $A$, whence it is itself QWEP by the aforementioned result of Kirchberg.
\end{proof}

The previous proposition appeared as \cite[Corollary 2.24]{KEP} but with QWEP replaced by WEP.

In \cite{omitting}, the authors ask whether or not every C$^*$ algebra is elementarily equivalent to a nuclear C$^*$ algebra.  It seems that the authors there were unaware of the fact that if their question had a positive answer, then the QWEP conjecture (and hence CEP) would also be settled.  Nevertheless, in the forthcoming manuscript \cite{many}, the authors settle this question in the negative by showing that neither $C^*_r(\mathbb{F})$ nor $\prod_\om M_n$ have nuclear models.

A question of Kirchberg, first appearing in print in \cite{ozawa}, asks something seemingly more modest than the QWEP conjecture:  is there an example of a non-nuclear C$^*$ algebra that has both WEP and the \emph{local lifting property} (LLP)?  Indeed, Kirchberg showed that the QWEP conjecture is equivalent to the statement that the LLP implies WEP.  Let us call the statement that there exists a non-nuclear C$^*$ algebra that has both WEP and LLP the \emph{weak QWEP conjecture}.

\begin{prop}
Let $A$ be either $C^*_r(\mathbb{F})$ or $\prod_\om M_n$.  If $A$ is elementarily equivalent to a C$^*$ algebra $B$ with LLP, then $B$ yields a positive solution to the weak QWEP conjecture.
\end{prop}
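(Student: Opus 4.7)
The plan is to verify that $B$ satisfies the three desiderata of the weak QWEP conjecture: non-nuclearity, LLP (given by hypothesis), and WEP.

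Non-nuclearity is immediate from the forthcoming work \cite{many}: that paper shows neither $C^*_r(\f)$ nor $\prod_\om M_n$ is elementarily equivalent to a nuclear C$^*$-algebra, so $B\equiv A$ forces $B$ to be non-nuclear.

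For WEP, the idea is to use elementary equivalence to embed $B$ into a manageable ultraproduct of $A$. By the continuous-logic Keisler--Shelah theorem, there is an ultrafilter $\u$ with $B^\u\cong A^\u$; combining this with Proposition \ref{folklore}(2) exhibits $B$ as a relatively weakly injective unital subalgebra of $A^\u$. When $A=\prod_\om M_n$, a Fubini-type identification presents $A^\u$ as a single ultraproduct of matrix algebras, and this is QWEP (closure of WEP under products \cite[Corollary 3.3(i)]{Kirchberg} together with closure of QWEP under quotients); Kirchberg's \cite[Corollary 3.3(iii)]{Kirchberg} then forces $B$ to be QWEP. When $A=C^*_r(\f)$, the same template applies once one has QWEP for $A^\u$, which can be approached through the Haagerup--Thorbjornsen embedding of $C^*_r(\f)$ into a matricial ultraproduct together with a relative-weak-injectivity check. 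With $B$ shown to be QWEP and LLP, a final appeal to the Kirchberg-type principle ``LLP $+$ QWEP $\Rightarrow$ WEP'' completes the proof.

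The main obstacle, in my view, is the concluding implication LLP $+$ QWEP $\Rightarrow$ WEP, which must be established without circularly invoking the full QWEP conjecture; this is the point at which one genuinely needs the LLP hypothesis on $B$ rather than just the QWEP extracted from elementary equivalence. A secondary but real subtlety is the $C^*_r(\f)$ case, where one must verify relative weak injectivity of the Haagerup--Thorbjornsen embedding in order to conclude that $C^*_r(\f)^\u$ is QWEP unconditionally.
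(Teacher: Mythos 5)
Your overall skeleton matches the paper's: non-nuclearity from \cite{many}, QWEP for $B$ from elementary equivalence, and then the implication LLP $+$ QWEP $\Rightarrow$ WEP. On the last point, your worry about circularity is unfounded: this implication is an unconditional theorem of Kirchberg, namely \cite[Corollary 2.6(ii)]{Kirchberg}, and is exactly what the paper cites; it is not a special case of the QWEP conjecture but a genuine tensor-product identity argument. So the ``main obstacle'' you identify is in fact a one-line citation.

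The real gap is in your route to ``$B$ is QWEP.'' The paper gets this immediately: $A$ is QWEP by known results ($C^*_r(\f)$ is QWEP by \cite[Proposition 3.3.8]{BO}, and $\prod_\om M_n$ is a quotient of the WEP algebra $\prod_n M_n$), and QWEP is an axiomatizable class by Proposition \ref{axiom} of this very paper, so $B\equiv A$ forces $B$ to be QWEP with no further work. Your detour through Keisler--Shelah essentially re-proves that axiomatizable properties are preserved by elementary equivalence, but then you attempt to verify directly that $A^\u$ is QWEP, and in the $C^*_r(\f)$ case you reach for the Haagerup--Thorbjornsen embedding and leave the required relative-weak-injectivity check unverified. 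This is both harder than necessary and incomplete: once you know $A$ itself is QWEP, the QWEP of $A^\u$ follows from closure of QWEP under ultrapowers (products plus quotients, as in the proof of Proposition \ref{axiom}), and no matricial embedding of $C^*_r(\f)$ is needed at all. Replacing that portion of your argument with the citation to \cite[Proposition 3.3.8]{BO} and Proposition \ref{axiom} closes the gap and recovers the paper's proof.
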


\begin{proof}
Since $A$ is QWEP (for the case of $C^*_r(\mathbb{F})$, see \cite[Proposition 3.3.8]{BO}), we have that $B$ is also QWEP by Proposition \ref{axiom}; since $B$ has LLP, we see that $B$ also has WEP \cite[Corollary 2.6(ii)]{Kirchberg}.  $B$ is not nuclear from the aforementioned result in \cite{many}.
\end{proof}

We end this note with something only tangentially related.  First a preparatory result:

\begin{prop}\label{cofinal}
Suppose that $A$ is a nonseparable C$^*$ algebra with a cofinal collection of separable subalgebras that have WEP.  Then $A$ has WEP.
\end{prop}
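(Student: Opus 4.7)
The plan is to fix an arbitrary inclusion $A \subseteq B$ and construct a weak expectation $\psi : B \to A^{**}$ by patching together the weak expectations coming from the cofinal family. Index the family as $\{A_i\}_{i \in I}$, a directed family (by inclusion) of separable WEP subalgebras whose union is $A$. For each $i$, WEP of $A_i$ provides a u.c.p.\ map $\phi_i : B \to A_i^{**}$ that restricts to $\id_{A_i}$. The inclusion $A_i \hookrightarrow A \subseteq A^{**}$ extends, by the universal property of the enveloping von Neumann algebra, to a normal u.c.p.\ map $\pi_i : A_i^{**} \to A^{**}$, and setting $\psi_i := \pi_i \circ \phi_i$ produces u.c.p.\ maps $\psi_i : B \to A^{**}$ with $\psi_i|_{A_i} = \id_{A_i}$.

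Next, I would choose an ultrafilter $\om$ on $I$ such that for every $a \in A$ the set $\{i \in I : a \in A_i\}$ belongs to $\om$; such an $\om$ exists because these sets have the finite intersection property by directedness of the family. Define $\psi : B \to A^{**}$ by $\psi(b) := \lim_\om \psi_i(b)$, where the limit is taken in the ultraweak topology on $A^{**}$. The limit exists because $\|\psi_i(b)\| \le \|b\|$ and the unit ball of $A^{**}$ is ultraweakly compact. Complete positivity and unitality of $\psi$ follow from the corresponding properties of the $\psi_i$ together with the fact that these properties pass through pointwise ultraweak limits (applied entrywise at the level of $M_n(B) \to M_n(A^{**})$ for the matricial version). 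For any $a \in A$, the equality $\psi_i(a) = a$ holds on an $\om$-large set of indices, so $\psi(a) = a$, and $\psi$ is the desired weak expectation.

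The main technical point is the existence of the normal extensions $\pi_i$; this is the standard fact that any u.c.p.\ map from a C$^*$ algebra into a von Neumann algebra extends uniquely to a normal u.c.p.\ map on the double dual. Beyond that, the construction is a direct ultrafilter argument in the spirit of the proof of Proposition \ref{folklore}(1), and I do not anticipate serious obstacles.
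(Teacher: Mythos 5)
Your proposal is correct and follows essentially the same route as the paper: for each member of the cofinal family you take the weak conditional expectation $B\to C^{**}\subseteq A^{**}$ and then pass to a pointwise ultraweak limit over the directed family. The paper leaves the limit construction implicit, whereas you make it precise with an ultrafilter on the index set and verify the u.c.p.\ and restriction properties; this is just a careful writing-out of the same argument.
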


\begin{proof}
Suppose that $A\subseteq B$; we must show that $A$ is relatively weakly injective in $B$.  To see this, for each separable $C\subseteq A$ with WEP, there is a weak conditional expectation $\phi_C:B\to C^{**}\subseteq A^{**}$.  By taking a pointwise-ultraweak limit of $\phi_C$ as $C$ ranges over a cofinal family of separable subalgebras with WEP, we get a witness to the fact that $A$ is relatively weakly injective in $B$.
\end{proof}

The following first appeared as Theorem 3.1 of \cite{qe}.

\begin{cor}
The theory of unital C$^*$ algebras does not have a model companion, meaning that the class of existentially closed unital C$^*$ algebras is not axiomatizable.
\end{cor}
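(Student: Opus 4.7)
The plan is to assume, for contradiction, that a model companion $T^*$ for the theory of unital C$^*$-algebras exists, in which case the models of $T^*$ are exactly the existentially closed unital C$^*$-algebras. Using Proposition \ref{cofinal}, I would show that every model of $T^*$ has WEP, and then combine this with the existential-embedding property of a model companion together with Kirchberg's transfer result for QWEP to conclude that every unital C$^*$-algebra is QWEP.

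The first step is to establish that every separable e.c.\ unital C$^*$-algebra has WEP: given a separable e.c.\ algebra $A$ and any extension $A\subseteq B$, the standard continuous-logic characterization of existential embeddings produces a $*$-homomorphism $B\to A^\om$ extending the diagonal embedding of $A$; since $*$-homomorphisms are u.c.p., Proposition \ref{folklore}(1) gives that $A$ is relatively weakly injective in $B$. For a nonseparable model $M$ of $T^*$, downward L\"owenheim--Skolem furnishes a cofinal collection of separable elementary substructures of $M$; these are themselves models of $T^*$ by axiomatizability, hence separable e.c., hence WEP by the previous observation. Proposition \ref{cofinal}---the preparatory result expressly tailored for this step---then lifts WEP from the cofinal family to all of $M$.

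For the final step, an arbitrary unital C$^*$-algebra $A$ embeds \emph{existentially} into some model $M$ of $T^*$, by the defining property of a model companion. Running the same Proposition \ref{folklore}(1) argument once more makes $A$ relatively weakly injective in the WEP algebra $M$, so Kirchberg's Corollary 3.3(iii) in \cite{Kirchberg} forces $A$ to be QWEP. Since $A$ was arbitrary, the QWEP conjecture would then follow. The main obstacle in the plan is converting this conditional consequence into the outright contradiction asserted by the corollary; the cited original proof in \cite{qe} presumably achieves this by invoking additional structural properties of separable e.c.\ algebras---such as simplicity or uniqueness of tracial state---that fail to propagate to the large e.c.\ algebras manufactured under the axiomatizability hypothesis, with Proposition \ref{cofinal} serving as the template for the separable-to-nonseparable transfer on the WEP side of the argument.
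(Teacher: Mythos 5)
There is a genuine gap, and you have in fact flagged it yourself: your argument terminates in the implication ``a model companion exists $\Rightarrow$ the QWEP conjecture holds,'' which is not a contradiction --- the QWEP conjecture is not known to be false (and the paper nowhere treats it as such), so no refutation of the model companion's existence follows. Moreover, the step feeding into that implication is itself shaky: the defining property of a model companion only guarantees that an arbitrary algebra $A$ \emph{embeds} into some model $M$ of $T^*$, not that it embeds \emph{existentially}; without existentiality you cannot run the Proposition \ref{folklore}(1) argument to get a weak conditional expectation $M\to A^{**}$, so even the conditional conclusion ``$A$ is QWEP'' is not secured. Your guess that the missing contradiction comes from simplicity or uniqueness of trace is also not how the paper proceeds.

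The paper's actual route is different in a crucial way: Proposition \ref{cofinal} is applied not to a nonseparable model $M$ of $T$ but to the \emph{ultrapower} $A^\om$ of a model $A$ of $T$. The separable elementary substructures of $A^\om$ are elementarily equivalent to $A$, hence (by the axiomatizability hypothesis) models of $T$, hence existentially closed, hence WEP by \cite[Corollary 2.4]{KEP}; they are cofinal in $A^\om$, so $A^\om$ has WEP. The contradiction then comes from \cite[Corollary 4.14]{FAE} (which rests on Junge--Pisier): if $A^\om$ has WEP then $A$ is subhomogeneous, and in particular finite --- whereas existentially closed C$^*$ algebras are purely infinite by \cite[Corollary 2.7]{KEP}. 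Your WEP-transfer machinery (separable e.c.\ algebras have WEP via existential embeddings into ultrapowers, plus L\"owenheim--Skolem and Proposition \ref{cofinal}) is sound and close in spirit to the first half of the paper's proof, but without the ultrapower twist and the subhomogeneity/pure-infiniteness clash the argument does not close.
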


\begin{proof}
Suppose that $T$ is the model companion of the theory of C$^*$ algebras, so the models of $T$ are precisely the existentially closed C$^*$ algebras.  By \cite[Corollary 2.4]{KEP}, all models of $T$ have WEP.  Let $A$ be a model of $T$.  Then $A^\om$ has a cofinal collection of WEP separable subalgebras, namely the separable elementary substructures of $A^\om$.  By Proposition \ref{cofinal}, $A^\om$ has WEP, whence $A$ is subhomogeneous by \cite[Corollary 4.14]{FAE}.  In particular, $A$ is finite.  Since existentially closed C$^*$ algebras are purely infinite by \cite[Corollary 2.7]{KEP}, we have a contradiction.  
\end{proof}

The advantage of the previous proof over the one in \cite{qe} is that the latter proof invokes a serious result of Haagerup and Thjorbornsen \cite{HT}, while the above proof ultimately relies instead on the (fundamental but more elementary) work of Junge and Pisier \cite{jungepisier}.

%
%
%
%

\end{document}